\documentclass[preprint,12pt]{elsarticle}

\usepackage[T1]{fontenc}
\usepackage[utf8]{inputenc}
\usepackage{amsmath,amssymb,amsthm,mathtools}
\usepackage{aliascnt}
\usepackage{microtype}
\usepackage[hidelinks]{hyperref}
\usepackage[nameinlink,noabbrev]{cleveref}

\allowdisplaybreaks[3]
\biboptions{numbers,sort&compress}

\newtheorem{theorem}{Theorem}[section]

\newaliascnt{lemma}{theorem}
\newtheorem{lemma}[lemma]{Lemma}
\aliascntresetthe{lemma}

\newaliascnt{proposition}{theorem}
\newtheorem{proposition}[proposition]{Proposition}
\aliascntresetthe{proposition}

\newaliascnt{corollary}{theorem}

\aliascntresetthe{corollary}

\newtheorem*{theorem*}{Theorem}

\theoremstyle{definition}
\newaliascnt{definition}{theorem}
\newtheorem{definition}[definition]{Definition}
\aliascntresetthe{definition}

\theoremstyle{remark}
\newaliascnt{remark}{theorem}

\aliascntresetthe{remark}

\crefname{theorem}{theorem}{theorems}
\Crefname{theorem}{Theorem}{Theorems}
\crefname{lemma}{lemma}{lemmas}
\Crefname{lemma}{Lemma}{Lemmas}
\crefname{proposition}{proposition}{propositions}
\Crefname{proposition}{Proposition}{Propositions}
\crefname{corollary}{corollary}{corollaries}
\Crefname{corollary}{Corollary}{Corollaries}
\crefname{definition}{definition}{definitions}
\Crefname{definition}{Definition}{Definitions}
\crefname{remark}{remark}{remarks}
\Crefname{remark}{Remark}{Remarks}

\newcommand{\N}{\mathbb{N}}
\newcommand{\Z}{\mathbb{Z}}
\newcommand{\Pcal}{\mathcal{P}}
\newcommand{\Fcal}{\mathcal{F}}
\newcommand{\lad}{\mathrm{ad}}
\newcommand{\1}{\mathbf{1}}
\newcommand{\essinf}{\operatorname*{ess\,inf}}
\newcommand{\esssup}{\operatorname*{ess\,sup}}
\newcommand{\norm}[1]{\left\lVert #1\right\rVert}

\newcommand{\lesssimc}{\mathrel{\lesssim}}
\newcommand{\approxsim}{\mathrel{\approx}}

\journal{Journal of Mathematical Analysis and Applications}

\begin{document}

\begin{frontmatter}

\title{Real interpolation for adapted sequence spaces with variable exponents}

\author[uncc]{Asad Ullah\corref{cor1}}
\cortext[cor1]{Corresponding author.}
\ead{aullah@uncc.edu}

\address[uncc]{Department of Mathematics and Statistics,
University of North Carolina at Charlotte,
9201 University City Blvd., Charlotte, North Carolina 28223, USA}

\begin{abstract}
We study real interpolation for adapted sequence spaces with variable
exponents. Let
\((\Omega,\Fcal,\mathbb{P};(\Fcal_n)_{n\geq1})\) be a filtered complete
probability space, let \(p(\cdot)\in\Pcal(\Omega)\), and let
\(0<q\leq\infty\) and \(0<\theta<1\). We prove that
\[
\bigl(L^{\lad}_{p(\cdot)},L^{\lad}_{\infty}\bigr)_{\theta,q}
=
L^{\lad}_{\widetilde p(\cdot),q},
\qquad
\frac{1}{\widetilde p(\cdot)}
=
\frac{1-\theta}{p(\cdot)},
\]
with equivalent quasi-norms. Here
\(L^{\lad}_{\widetilde p(\cdot),q}\) consists of adapted sequences
\(f=(f_n)_{n\geq1}\) whose square function
\(\sigma(f)=(\sum_{n=1}^{\infty}|f_n|^2)^{1/2}\) belongs to the
variable Lorentz space \(L_{\widetilde p(\cdot),q}\). The proof uses a
decomposition that preserves adaptedness and provides an upper estimate
for the corresponding \(K\)-functional. No continuity condition on the
variable exponent and no measurability relation between \(p(\cdot)\)
and the filtration are required.
\end{abstract}

\begin{keyword}
Real interpolation \sep Variable exponents \sep Adapted sequences
\sep Variable Lorentz spaces \sep \(K\)-functional
\end{keyword}

\end{frontmatter}

\section{Introduction}\label{sec:introduction}

Let
$
p(\cdot):\Omega\longrightarrow(0,\infty)
$
be a measurable function defined on a complete probability
space $(\Omega,\Fcal,\mathbb{P})$. The variable Lebesgue space
$L_{p(\cdot)}(\Omega)$ consists of measurable functions whose
integrability is governed pointwise by the exponent $p(\cdot)$.
When $p(\cdot)\equiv p$ is constant, the space
$L_{p(\cdot)}(\Omega)$ coincides with the classical Lebesgue
space $L_p(\Omega)$. Variable exponent spaces have been studied
extensively because of their applications to harmonic analysis,
partial differential equations, and variational problems; see,
for example,
\cite{CruzUribeFiorenza,DieningBook,KovacikRakosnik,NakaiSawano}.
Real interpolation for variable exponent spaces was developed
by Kempka and Vyb\'iral \cite{KempkaVybiral}. In particular,
they identified the interpolation space between a variable
Lebesgue space and $L_\infty$ as a variable Lorentz space. In the martingale setting, Jiao, Weisz, Zhou, and Wu
\cite{JiaoEtAlInterpolation} developed variable martingale Hardy
and Lorentz--Hardy spaces, established atomic decompositions and
martingale inequalities, and obtained applications to Fourier
analysis.

The use of square-summable adapted sequences is classical in
martingale theory. The classical inequality of L\'epingle
\cite{Lepingle} compares the $\ell_2$-norm of the predictable
projections of an adapted sequence with the $\ell_2$-norm of the
original sequence in the classical $L_p$ setting. More recently,
Randrianantoanina \cite{Randrianantoanina} considered adapted
sequences in the general K\"othe--Bochner space
$E(\Omega;\ell_2)$, proved a Davis-type decomposition for such
sequences, and established a L\'epingle inequality under a dual
form of Doob's maximal inequality. Taking
$E=L_{p(\cdot)}$ gives the variable-exponent ambient space that
underlies the present work. To the best of our knowledge, the
specific adapted variable Lebesgue and adapted variable Lorentz
sequence spaces $L^{\lad}_{p(\cdot)}$ and
$L^{\lad}_{p(\cdot),q}$ are isolated and named for the first time
in this paper. Consequently, to the best of our knowledge, no
earlier structural or interpolation results are known
specifically for these spaces. They are motivated by the natural
question of whether the scalar interpolation identity for
$(L_{p(\cdot)},L_\infty)$ lifts to the closed subspace determined
only by adaptedness, without imposing martingale cancellation.
\Cref{prop:complete} therefore establishes the first basic
structural property of $L^{\lad}_{p(\cdot)}$: it is an isometric
closed subspace of $L_{p(\cdot)}(\Omega;\ell_2)$ and hence a
quasi-Banach space, while $L^{\lad}_{p(\cdot),q}$ is obtained by
pulling back the variable Lorentz quasi-norm through the
square-function map $f\mapsto\sigma(f)$.

For a given martingale \(f=(f_n)_{n\geq0}\), the corresponding
square function is denoted by
\[
s(f)
:=
\left(
\sum_{n\geq0}
\mathbb{E}_{n-1}|d_nf|^2
\right)^{1/2}.
\]

The classical martingale Hardy space \(H_p^s\) is defined by
\[
H_p^s
:=
\left\{
f=(f_n):
\norm{f}_{H_p^s}
:=
\norm{s(f)}_p
<\infty
\right\}.
\]

In the classical martingale setting, Weisz~\cite{Weisz}
established, using the Hardy inequality and Theorem 5.8
of~\cite{Weisz}, that
\[
(H_p^s,H_\infty^s)_{\theta,q}
=
H_{p/(1-\theta),q}^s,
\qquad
0<q\leq\infty,
\quad
0<\theta<1.
\]

The variable-exponent martingale theory developed in
\cite{JiaoEtAlInterpolation} provides a natural comparison point,
but the present work concerns arbitrary adapted sequences rather
than martingale difference sequences. This distinction removes the
cancellation structure used in martingale Hardy spaces and leads to
a separate filtration-compatible truncation problem.

Recall that a sequence
\(f=(f_n)_{n\geq1}\subset L_0(\Omega)\) is said to be adapted
if each \(f_n\) is \(\Fcal_n\)-measurable. Let
$(\Fcal_n)_{n\geq1}$ be an increasing sequence of
sub-$\sigma$-algebras of $\Fcal$, and let
$f=(f_n)_{n\geq1}$ be adapted to this filtration. Set
\[
\sigma_n(f)
=
\left(
\sum_{k=1}^{n}|f_k|^2
\right)^{1/2},
\qquad
\sigma(f)
=
\left(
\sum_{k=1}^{\infty}|f_k|^2
\right)^{1/2}.
\]

We then define the space of adapted sequences by
\[
L_{p(\cdot)}^{\mathrm{ad}}(\Omega)
:=
\left\{
g=(g_n):
g\text{ is adapted},
\ \norm{\sigma(g)}_{p(\cdot)}<\infty
\right\}.
\]

We now consider the interpolation of adapted sequence spaces.
Analogously to the Hardy-space setting, we aim to prove the
following theorem.

\begin{theorem}\label{thm:main}
Let $p(\cdot)\in\Pcal(\Omega)$,
$0<q\leq\infty$, $0<\theta<1$, and
\[
\frac{1}{\widetilde p(\cdot)}
:=
\frac{1-\theta}{p(\cdot)}.
\]
Then
\[
\bigl(
L^{\lad}_{p(\cdot)},
L^{\lad}_{\infty}
\bigr)_{\theta,q}
=
L^{\lad}_{\widetilde p(\cdot),q}
\]
with equivalent quasi-norms.
\end{theorem}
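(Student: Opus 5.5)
The plan is to reduce \Cref{thm:main} to the scalar identity of Kempka and Vyb\'iral,
\[
\bigl(L_{p(\cdot)},L_\infty\bigr)_{\theta,q}=L_{\widetilde p(\cdot),q},
\]
by proving a two-sided comparison of $K$-functionals,
\[
K\bigl(t,f;L^{\lad}_{p(\cdot)},L^{\lad}_{\infty}\bigr)\approx K\bigl(t,\sigma(f);L_{p(\cdot)},L_\infty\bigr),\qquad t>0,
\]
for every adapted $f$, with constants independent of $f$ and $t$. Once this holds, taking the $(\theta,q)$ quasi-norm in $t$ on both sides gives $\|f\|_{(L^{\lad}_{p(\cdot)},L^{\lad}_\infty)_{\theta,q}}\approx\|\sigma(f)\|_{(L_{p(\cdot)},L_\infty)_{\theta,q}}\approx\|\sigma(f)\|_{L_{\widetilde p(\cdot),q}}$. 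The last quantity is by definition $\|f\|_{L^{\lad}_{\widetilde p(\cdot),q}}$.

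\textbf{Lower bound.} Let $f=g+h$ with $g,h$ adapted. By the triangle inequality in $\ell_2$ we have $\sigma(f)\le\sigma(g)+\sigma(h)$ pointwise. Then $\sigma(f)=\min(\sigma(f),\sigma(g))+(\sigma(f)-\sigma(g))_+$, where the first term is dominated by $\sigma(g)$ and the second by $\sigma(h)$. Since the $L_{p(\cdot)}$ and $L_\infty$ quasi-norms are lattice quasi-norms, this yields $K(t,\sigma(f))\le\|\sigma(g)\|_{p(\cdot)}+t\|\sigma(h)\|_\infty$. Taking the infimum over decompositions gives $K(t,\sigma(f))\lesssim K(t,f)$. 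This direction is routine.

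\textbf{Upper bound.} This is the main obstacle, because the scalar optimal decomposition truncates $\sigma(f)$ at a level, and that operation does not see the filtration. The natural candidate $f_n\mathbf 1_{\{\sigma(f)>\lambda\}}$ is not adapted. I would use a stopping-time decomposition instead. Fix $\lambda>0$ and let
\[
\tau=\inf\{n:\sigma_n(f)>\lambda\}.
\]
Since each $\sigma_n(f)$ is $\Fcal_n$-measurable, $\{\tau\ge n\}$ is not $\Fcal_n$-measurable in general, but $\{\tau>n\}=\{\sigma_n(f)\le\lambda\}\in\Fcal_n$. Define
\[
h_n=f_n\mathbf 1_{\{\tau>n\}},\qquad g_n=f_n\mathbf 1_{\{\tau\le n\}}.
\]
Both sequences are adapted. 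Moreover $\sigma(h)=\sigma_{\tau-1}(f)\le\lambda$, so $\|\sigma(h)\|_\infty\le\lambda$. Also $\sigma(g)\le\sigma(f)\mathbf 1_{\{\sigma(f)>\lambda\}}$, because $\tau<\infty$ exactly when $\sigma(f)>\lambda$, and $\sigma(g)\le\sigma(f)$. This is the adaptedness-preserving decomposition announced in the abstract, and no continuity of $p(\cdot)$ or relation between $p(\cdot)$ and the filtration enters.

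To conclude, I would recall (or prove via the Kempka--Vyb\'iral argument) the scalar estimate
\[
K(t,\varphi;L_{p(\cdot)},L_\infty)\approx\bigl\|\varphi\mathbf 1_{\{\varphi>\lambda(t)\}}\bigr\|_{p(\cdot)}+t\lambda(t)
\]
for $\varphi\ge0$, where $\lambda(t)$ is a suitable level. For instance, take $\lambda(t)=\inf\{\lambda:\|(\varphi-\lambda)_+\|_{p(\cdot)}\le t\lambda\}$, adjusted so that $\min(\varphi,\lambda)$ and $(\varphi-\lambda)_+$ realize the infimum up to constants. If the truncation $(\varphi-\lambda)_+$ is what appears, I would compare it with $\varphi\mathbf 1_{\{\varphi>2\lambda\}}$ plus a bounded piece: on $\{\lambda<\varphi\le2\lambda\}$ the function is at most $2\lambda$, and this piece is absorbed into the $L_\infty$ part by a doubling of $\lambda$.

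Applying the stopping-time decomposition with $\varphi=\sigma(f)$ at level $2\lambda(t)$ then gives
\[
K(t,f;L^{\lad}_{p(\cdot)},L^{\lad}_\infty)\lesssim K(t,\sigma(f);L_{p(\cdot)},L_\infty).
\]
One technical point needs care: the exact form of the scalar level-set formula for the variable exponent quasi-norm, which lacks rearrangement invariance. I expect the cleanest route is to avoid an explicit formula and instead show directly that for any scalar split $\sigma(f)=a+b$ with $a,b\ge0$, choosing $\lambda=2\|b\|_\infty$ gives $\sigma(f)\mathbf 1_{\{\sigma(f)>\lambda\}}\le2a$. This holds because on $\{\sigma(f)>2\|b\|_\infty\}$ we have $a=\sigma(f)-b>\sigma(f)/2$. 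The resulting adapted decomposition then costs at most $2\|a\|_{p(\cdot)}+2t\|b\|_\infty$.
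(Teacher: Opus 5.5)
Your proof is correct, and its core matches the paper's. Your stopping-time split is exactly the truncation of \Cref{lem:k-estimate}, since $\{\tau>n\}=\{\sigma_n(f)\le\lambda\}=Q_n$; your identity $\sigma(h)=\sigma_{\tau-1}(f)\le\lambda$ is a cleaner substitute for the paper's telescoping. Your lattice argument for $K(t,\sigma(f))\le K(t,f)$ is what the cited sublinear-operator theorem amounts to in the first embedding.

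The routes differ in how the converse embedding is finished. The paper keeps only the bound $K(f,t)\le\inf_{\mu>0}\{\norm{\sigma(f)\mathbf{1}_{\{\sigma(f)>\mu\}}}_{p(\cdot)}+t\mu\}$ and then estimates the $(\theta,q)$-quasi-norm by hand. That computation uses dyadic layering of $\sigma(f)\mathbf{1}_{\{\sigma(f)>\mu\}}$, the level function $\mu(t)$, a H\"older step with $\varepsilon<\theta/(1-\theta)$ when $(1-\theta)q>\underline p$, the scaling identity \eqref{eq:indicator-scaling}, and the dyadic description \eqref{eq:dyadic-lorentz}. You instead observe that any scalar split $\sigma(f)=a+b$ dominates the truncation at level $2\norm{b}_\infty$. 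This upgrades the same bound to $K(t,f)\le 2K(t,\sigma(f);L_{p(\cdot)},L_\infty)$, and you then use \Cref{lem:scalar-interpolation} as a black box in both directions.

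What your route buys:
\begin{itemize}
\item It is shorter and avoids the case split in $s$ and all dyadic bookkeeping.
\item It shows directly that $f\in L^{\lad}_{p(\cdot)}+L^{\lad}_\infty$ whenever $\sigma(f)\in L_{p(\cdot)}+L_\infty$.
\item Because $K(t,\sigma(f))\le K(t,f)\le 2K(t,\sigma(f))$ holds pointwise in $t$, it identifies the adapted interpolation space for every $K$-method parameter, not only $(\theta,q)$.
\end{itemize}
In exchange, the paper's computation makes the converse embedding rest only on the elementary dyadic form of the $L_{\widetilde p(\cdot),q}$ quasi-norm, not on the full scalar identity. The paper still needs that identity for the first embedding.

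Three minor remarks:
\begin{itemize}
\item The restriction $a,b\ge0$ is unnecessary. For arbitrary $a,b$ one still has $|a|\ge\sigma(f)-|b|>\sigma(f)/2$ on $\{\sigma(f)>2\norm{b}_\infty\}$.
\item The tentative level-set formula with $\lambda(t)$ in your penultimate paragraph can simply be dropped.
\item Your aside that $\{\tau\ge n\}$ need not lie in $\Fcal_n$ is false. With $\sigma_0(f):=0$, we have $\{\tau\ge n\}=\{\sigma_{n-1}(f)\le\lambda\}$, which lies in $\Fcal_n$, indeed in $\Fcal_{n-1}$ for $n\ge2$. The real reason to use $\{\tau>n\}$ is that it excludes the overshoot $f_\tau$, and that is what gives $\sigma(h)\le\lambda$.
\end{itemize}
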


Here \(L^{\lad}_{\widetilde p(\cdot),q}\) denotes the adapted
variable Lorentz sequence space from
\Cref{def:adapted-spaces}; explicitly, it consists of all
adapted sequences \(f=(f_n)_{n\geq1}\) for which
\(\sigma(f)\in L_{\widetilde p(\cdot),q}\), and
\[
\norm{f}_{L^{\lad}_{\widetilde p(\cdot),q}}
=
\norm{\sigma(f)}_{L_{\widetilde p(\cdot),q}}.
\]

Although the statement is analogous to the Hardy-space result,
the passage from martingales to arbitrary adapted sequences
creates a different obstruction. In the conditional Hardy
setting, the decomposition is performed on martingale
differences by stopping-time sets associated with the
conditional square function, so the martingale-difference
structure is retained. Here there is no cancellation condition,
and the direct scalar truncation by
\(\{\sigma(f)\leq\mu\}\) cannot be applied to the \(n\)th
coordinate because this terminal event need not be
\(\Fcal_n\)-measurable. Moreover, interpolation of the ambient
K\"othe--Bochner spaces does not automatically pass to the
adapted subspace: under our hypotheses, we cannot invoke a
bounded projection onto adapted sequences, and no measurability
relation between \(p(\cdot)\) and the filtration is assumed.

We overcome these points by truncating the \(n\)th coordinate on
the genuinely adapted set
\[
Q_n
=
\{\sigma_n(f)\leq\mu\}.
\]
The sets \((Q_n)\) are decreasing, and a telescoping argument is
needed to show that the truncated sequence has uniformly
bounded square function. The complementary sequence is then
controlled by
\[
\sigma(f)\1_{\{\sigma(f)>\mu\}}.
\]
This adapted truncation is the new ingredient that replaces the
stopping-time and atomic machinery used in the variable Hardy
space setting.

The paper is organized as follows. \Cref{sec:preliminaries}
contains the necessary preliminaries concerning variable
Lebesgue spaces, variable Lorentz spaces, adapted sequence
spaces, and the real interpolation method. In
\Cref{sec:proof-main}, we prove the \(K\)-functional estimate and
then establish \Cref{thm:main}.

Throughout this paper, $\Z$ and $\N$ denote the sets of
integers and positive integers, respectively. The letter $C$
denotes a positive constant that may change from line to line.
We write
$
A\lesssimc B
$
if $A\leq CB$, and
$
A\approxsim B
$
if both $A\lesssimc B$ and $B\lesssimc A$.

\section{Preliminaries}\label{sec:preliminaries}

In this section, we introduce the notation and basic results
used in the proof of the main theorem.

\subsection{Variable Lebesgue spaces and Lorentz spaces}
\label{subsec:variable-spaces}

Let $(\Omega,\Fcal,\mathbb{P})$ be a complete probability
space. A measurable function
\[
p(\cdot):\Omega\longrightarrow(0,\infty)
\]
is called a variable exponent.

For a measurable set $A\subseteq\Omega$, define
\[
p_-(A)
:=
\essinf_{\omega\in A}p(\omega),
\qquad
p_+(A)
:=
\esssup_{\omega\in A}p(\omega),
\]
and write
\[
p_-:=p_-(\Omega),
\qquad
p_+:=p_+(\Omega).
\]

We denote by $\Pcal(\Omega)$ the collection of all variable
exponents satisfying
\[
0<p_-\leq p_+<\infty.
\]

For $p(\cdot)\in\Pcal(\Omega)$, the modular is defined by
\[
\rho_{p(\cdot)}(f)
:=
\int_\Omega
|f(\omega)|^{p(\omega)}
\,d\mathbb{P}(\omega).
\]

The variable Lebesgue space
\[
L_{p(\cdot)}
=
L_{p(\cdot)}(\Omega)
\]
is the set of measurable functions $f$ such that
\[
\rho_{p(\cdot)}(f/\lambda)<\infty
\]
for some $\lambda>0$. It is equipped with the Luxemburg
quasi-norm
\begin{equation}\label{eq:luxemburg}
\norm{f}_{p(\cdot)}
:=
\inf\left\{
\lambda>0:
\rho_{p(\cdot)}(f/\lambda)\leq1
\right\}.
\end{equation}

Put
\[
\underline p
:=
\min\{p_-,1\}.
\]

Then, for all $f,g\in L_{p(\cdot)}$,
\begin{equation}\label{eq:quasi-triangle}
\norm{f+g}_{p(\cdot)}^{\underline p}
\leq
\norm{f}_{p(\cdot)}^{\underline p}
+
\norm{g}_{p(\cdot)}^{\underline p}.
\end{equation}

Consequently, $L_{p(\cdot)}$ is a quasi-Banach space; see
\cite{CruzUribeFiorenza,DieningBook}.

\begin{definition}\label{def:lorentz}
Let $p(\cdot)\in\Pcal(\Omega)$ and $0<q\leq\infty$.

The variable Lorentz space $L_{p(\cdot),q}(\Omega)$ consists
of all measurable functions $f$ for which
\begin{equation}\label{eq:lorentz-norm}
\norm{f}_{L_{p(\cdot),q}}
:=
\begin{cases}
\displaystyle
\left(
\int_0^\infty
\lambda^q
\norm{\1_{\{|f|>\lambda\}}}_{p(\cdot)}^q
\frac{d\lambda}{\lambda}
\right)^{1/q},
&0<q<\infty,
\\[1.2em]
\displaystyle
\sup_{\lambda>0}
\lambda
\norm{\1_{\{|f|>\lambda\}}}_{p(\cdot)},
&q=\infty,
\end{cases}
\end{equation}
is finite.
\end{definition}

We shall use the following elementary scaling property. If
$a>0$ and
\[
r(\cdot)
=
\frac{p(\cdot)}{a},
\]
then, for every measurable set $A\subseteq\Omega$,
\begin{equation}\label{eq:indicator-scaling}
\norm{\1_A}_{r(\cdot)}
=
\norm{\1_A}_{p(\cdot)}^a.
\end{equation}

Moreover, for $0<q<\infty$,
\begin{equation}\label{eq:dyadic-lorentz}
\norm{f}_{L_{p(\cdot),q}}^q
\approxsim
\sum_{k\in\Z}
2^{kq}
\norm{\1_{\{|f|>2^k\}}}_{p(\cdot)}^q,
\end{equation}
and for $q=\infty$,
\begin{equation}\label{eq:dyadic-lorentz-infty}
\norm{f}_{L_{p(\cdot),\infty}}
\approxsim
\sup_{k\in\Z}
2^k
\norm{\1_{\{|f|>2^k\}}}_{p(\cdot)}.
\end{equation}

\subsection{Adapted sequence spaces}
\label{subsec:adapted-spaces}

Let $(\Fcal_n)_{n\geq1}$ be an increasing sequence of
sub-$\sigma$-algebras of $\Fcal$.

A sequence
\[
f=(f_n)_{n\geq1}
\]
is called adapted if $f_n$ is $\Fcal_n$-measurable for every
$n\geq1$.

For an adapted sequence $f$, define
\begin{equation}\label{eq:sigma-def}
\sigma_n(f)
:=
\left(
\sum_{k=1}^{n}|f_k|^2
\right)^{1/2},
\qquad
\sigma(f)
:=
\left(
\sum_{k=1}^{\infty}|f_k|^2
\right)^{1/2}.
\end{equation}

\begin{definition}\label{def:adapted-spaces}
Let $p(\cdot)\in\Pcal(\Omega)$ and $0<q\leq\infty$.

The adapted variable Lebesgue sequence space is
\[
L^{\lad}_{p(\cdot)}
:=
\left\{
f=(f_n)_{n\geq1}:
f\text{ is adapted and }
\sigma(f)\in L_{p(\cdot)}
\right\},
\]
with quasi-norm
\[
\norm{f}_{L^{\lad}_{p(\cdot)}}
:=
\norm{\sigma(f)}_{p(\cdot)}.
\]

The adapted variable Lorentz sequence space is
\[
L^{\lad}_{p(\cdot),q}
:=
\left\{
f=(f_n)_{n\geq1}:
f\text{ is adapted and }
\sigma(f)\in L_{p(\cdot),q}
\right\},
\]
with quasi-norm
\[
\norm{f}_{L^{\lad}_{p(\cdot),q}}
:=
\norm{\sigma(f)}_{L_{p(\cdot),q}}.
\]

Finally,
\[
\norm{f}_{L^{\lad}_\infty}
:=
\norm{\sigma(f)}_\infty.
\]
\end{definition}

\begin{proposition}\label{prop:complete}
For every $p(\cdot)\in\Pcal(\Omega)$, the space
$L^{\lad}_{p(\cdot)}$ is a quasi-Banach space.

More precisely, it is an isometric closed subspace of the
vector-valued space
\[
L_{p(\cdot)}(\Omega;\ell_2).
\]
\end{proposition}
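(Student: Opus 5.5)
The plan is to identify $L^{\lad}_{p(\cdot)}$ with a subspace of $L_{p(\cdot)}(\Omega;\ell_2)$ via the map $J f=(f_n)_{n\geq1}$, viewed as an $\ell_2$-valued function $\omega\mapsto (f_n(\omega))_n$. First I check that $J$ is well defined and isometric. Since $\norm{Jf(\omega)}_{\ell_2}=\sigma(f)(\omega)$ pointwise, we have $\norm{Jf}_{L_{p(\cdot)}(\Omega;\ell_2)}=\norm{\sigma(f)}_{p(\cdot)}=\norm{f}_{L^{\lad}_{p(\cdot)}}$ by the definitions. Measurability of $Jf$ as an $\ell_2$-valued map follows because each coordinate is measurable and $\ell_2$ is separable (Pettis). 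Linearity is clear, and adaptedness is preserved under linear combinations.

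The quasi-triangle inequality \eqref{eq:quasi-triangle} also holds in $L_{p(\cdot)}(\Omega;\ell_2)$. This is because $\sigma(f+g)\leq\sigma(f)+\sigma(g)$ pointwise, the Luxemburg quasi-norm is monotone, and \eqref{eq:quasi-triangle} applies to the scalar functions. Hence $L^{\lad}_{p(\cdot)}$ is a quasi-normed space, provided we identify sequences that agree a.e. in every coordinate.

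The main step is closedness, i.e.\ completeness. I would argue directly rather than quoting completeness of the Bochner space, though either works. Let $(f^{(j)})$ be Cauchy in $L^{\lad}_{p(\cdot)}$. Pass to a subsequence with
\[
\norm{\sigma(f^{(j+1)}-f^{(j)})}_{p(\cdot)}^{\underline p}\leq 2^{-j}.
\]
Set $G=\sum_j\sigma(f^{(j+1)}-f^{(j)})$. The $\underline p$-subadditivity together with Fatou's lemma for the modular (monotone convergence of partial sums) gives $\norm{G}_{p(\cdot)}<\infty$, so $G<\infty$ a.e.

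Consequently, for a.e.\ $\omega$ the sequence $(Jf^{(j)}(\omega))$ is Cauchy in $\ell_2$. Its limit $f(\omega)$ has coordinates $f_n=\lim_j f^{(j)}_n$ a.e. Each $f_n$ is an a.e.\ limit of $\Fcal_n$-measurable functions, hence $\Fcal_n$-measurable after modification on a null set. This is exactly where completeness of $\mathbb{P}$ is needed, since $\Fcal_n$ may not contain all null sets and we may have to choose representatives. So $f$ is adapted.

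Moreover,
\[
\sigma(f-f^{(j)})\leq\sum_{i\geq j}\sigma(f^{(i+1)}-f^{(i)})
\]
a.e. by the triangle inequality in $\ell_2$, so the same estimate yields $\norm{\sigma(f-f^{(j)})}_{p(\cdot)}^{\underline p}\lesssim 2^{-j}\to0$. In particular $\sigma(f)\in L_{p(\cdot)}$. A Cauchy sequence with a convergent subsequence converges, which gives completeness.

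Closedness in $L_{p(\cdot)}(\Omega;\ell_2)$ then follows from the same subsequence argument: a limit of adapted sequences has a subsequence converging a.e. coordinatewise, so the limit is adapted.

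The only delicate point is the measurability issue for the a.e.\ limit with respect to $\Fcal_n$. I would handle it by defining $f_n:=\limsup_j f^{(j)}_n$ (set to $0$ where infinite). This is genuinely $\Fcal_n$-measurable and agrees a.e.\ with the limit.
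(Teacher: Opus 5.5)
Your proof is correct and follows the paper's route. Both use the isometric identification of $\norm{f}_{L^{\lad}_{p(\cdot)}}$ with the $L_{p(\cdot)}(\Omega;\ell_2)$ quasi-norm, and both observe that a subsequence converges a.e.\ coordinatewise, so the limit stays adapted; you additionally reprove completeness of the ambient space by a Riesz--Fischer argument where the paper simply cites it. Your $\limsup$ representative is the right way to obtain a genuinely $\Fcal_n$-measurable limit: completeness of $\mathbb{P}$, which the paper invokes and your middle paragraph also credits, does not put null sets into $\Fcal_n$, and it is not needed once you choose that representative.
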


\begin{proof}
The ambient vector-valued space
$L_{p(\cdot)}(\Omega;\ell_2)$ consists of measurable
$\ell_2$-valued functions $g=(g_n)_{n\geq1}$ for which
\[
\norm{g}_{L_{p(\cdot)}(\Omega;\ell_2)}
:=
\norm{
\left(
\sum_{n=1}^{\infty}|g_n|^2
\right)^{1/2}
}_{p(\cdot)}
<\infty.
\]

If $g\in L^{\lad}_{p(\cdot)}$, then
\[
\norm{g}_{L^{\lad}_{p(\cdot)}}
=
\norm{\sigma(g)}_{p(\cdot)}
=
\norm{
\left(
\sum_{n=1}^{\infty}|g_n|^2
\right)^{1/2}
}_{p(\cdot)}
=
\norm{g}_{L_{p(\cdot)}(\Omega;\ell_2)}.
\]
Hence, $L^{\lad}_{p(\cdot)}$ is an isometric linear subspace of
$L_{p(\cdot)}(\Omega;\ell_2)$.

It remains to show that this subspace is closed. Let
\[
f^{(m)}
=
\bigl(f_j^{(m)}\bigr)_{j\geq1}
\in L^{\lad}_{p(\cdot)}
\]
and suppose that
\[
f^{(m)}\longrightarrow f=(f_j)_{j\geq1}
\quad\text{in}\quad
L_{p(\cdot)}(\Omega;\ell_2).
\]
For every fixed $j\geq1$,
\[
|f_j^{(m)}-f_j|
\leq
\left(
\sum_{k=1}^{\infty}
|f_k^{(m)}-f_k|^2
\right)^{1/2},
\]
and therefore
\[
\norm{f_j^{(m)}-f_j}_{p(\cdot)}
\leq
\norm{f^{(m)}-f}_{L_{p(\cdot)}(\Omega;\ell_2)}
\longrightarrow0.
\]
Thus, for each $j$, there exists a subsequence converging to
$f_j$ almost everywhere. Since every $f_j^{(m)}$ is
$\Fcal_j$-measurable and the probability space is complete,
$f_j$ is $\Fcal_j$-measurable. Hence $f$ is adapted, so
$L^{\lad}_{p(\cdot)}$ is closed in
$L_{p(\cdot)}(\Omega;\ell_2)$.

Since $L_{p(\cdot)}(\Omega;\ell_2)$ is complete, it follows
that $L^{\lad}_{p(\cdot)}$ is complete.
\end{proof}

\subsection{The real interpolation method}
\label{subsec:real-interpolation}

Let $(X_0,X_1)$ be a compatible couple of quasi-Banach spaces.
For $x\in X_0+X_1$ and $t>0$, the $K$-functional is defined by
\begin{equation}\label{eq:k-functional}
K(x,t;X_0,X_1)
:=
\inf\left\{
\norm{x_0}_{X_0}
+
t\norm{x_1}_{X_1}:
x=x_0+x_1
\right\}.
\end{equation}

Let $0<\theta<1$ and $0<q\leq\infty$. The real interpolation
space $(X_0,X_1)_{\theta,q}$ consists of all
$x\in X_0+X_1$ such that
\begin{equation}\label{eq:interpolation-norm}
\norm{x}_{(X_0,X_1)_{\theta,q}}
:=
\begin{cases}
\displaystyle
\left(
\int_0^\infty
\bigl[
t^{-\theta}
K(x,t;X_0,X_1)
\bigr]^q
\frac{dt}{t}
\right)^{1/q},
&0<q<\infty,
\\[1.2em]
\displaystyle
\sup_{t>0}
t^{-\theta}
K(x,t;X_0,X_1),
&q=\infty,
\end{cases}
\end{equation}
is finite.

The following scalar interpolation identity is due to Kempka
and Vyb\'iral \cite{KempkaVybiral}.

\begin{lemma}\label{lem:scalar-interpolation}
Let $p(\cdot)\in\Pcal(\Omega)$,
$0<q\leq\infty$, and $0<\theta<1$.

Define $\widetilde p(\cdot)$ by
\[
\frac{1}{\widetilde p(\cdot)}
=
\frac{1-\theta}{p(\cdot)}.
\]
Then
\begin{equation}\label{eq:scalar-interpolation}
\bigl(
L_{p(\cdot)},
L_\infty
\bigr)_{\theta,q}
=
L_{\widetilde p(\cdot),q}
\end{equation}
with equivalent quasi-norms.
\end{lemma}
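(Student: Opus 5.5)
Although \Cref{lem:scalar-interpolation} is quoted from \cite{KempkaVybiral}, I would prove it directly by estimating the $K$-functional of the couple $(L_{p(\cdot)},L_\infty)$ through the level-set function
\[
\varphi_f(\lambda):=\norm{\1_{\{|f|>\lambda\}}}_{p(\cdot)},
\]
which is non-increasing in $\lambda$. The basic translation device is \eqref{eq:indicator-scaling} with $a=1/(1-\theta)$, i.e.\ $r(\cdot)=\widetilde p(\cdot)$. It gives $\norm{\1_A}_{\widetilde p(\cdot)}=\norm{\1_A}_{p(\cdot)}^{1-\theta}$, so by \eqref{eq:dyadic-lorentz}
\[
\norm{f}_{L_{\widetilde p(\cdot),q}}^q\approxsim\sum_{k\in\Z}2^{kq}\varphi_f(2^k)^{(1-\theta)q},
\]
and similarly for $q=\infty$ via \eqref{eq:dyadic-lorentz-infty}. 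On the interpolation side, monotonicity of $K(f,t)$ and of $K(f,t)/t$ lets me replace \eqref{eq:interpolation-norm} by the dyadic sum $\sum_{j}2^{-j\theta q}K(f,2^j)^q$, or the corresponding supremum.

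\textbf{Upper bound for $K$.} I use the truncation $f=f\1_{\{|f|>\mu\}}+f\1_{\{|f|\le\mu\}}$, which gives
\[
K(f,t)\le\norm{f\1_{\{|f|>\mu\}}}_{p(\cdot)}+t\mu.
\]
To estimate the first term, I split $\{|f|>\mu\}$ into the dyadic layers $\{2^k<|f|\le2^{k+1}\}$ with $2^k\ge\mu$. Applying \eqref{eq:quasi-triangle} with exponent $\underline p$ yields
\[
\norm{f\1_{\{|f|>\mu\}}}_{p(\cdot)}^{\underline p}\lesssimc\sum_{2^k\ge\mu/2}2^{k\underline p}\varphi_f(2^k)^{\underline p}.
\]
Next I couple $t$ and $\mu$ by choosing, for $t=2^j$, the level $\mu=2^{k(j)}$ with $k(j)$ the least integer such that $\varphi_f(2^{k(j)})\le 2^j$. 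With this choice $t^{-\theta}\,t\mu\approx \mu\,\varphi_f(\mu)^{1-\theta}$ up to one dyadic shift, which is exactly a term of the $\widetilde p$-Lorentz sum. The tail $t^{-\theta}\bigl(\sum_{2^k\ge\mu}2^{k\underline p}\varphi_f(2^k)^{\underline p}\bigr)^{1/\underline p}$ I would handle with a discrete Hardy inequality, summed over $j$ in $\ell_q$ (for all $0<q\le\infty$). The extra room needed to absorb the factor $t^{-\theta}$ comes from $\varphi_f(2^k)\le 2^j$ for $k\ge k(j)$, i.e.\ $\varphi_f(2^k)^{\theta}\le t^{\theta}$.

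\textbf{Lower bound for $K$.} Take any decomposition $f=g+h$ with $\norm{g}_{p(\cdot)}+t\norm{h}_\infty\le2K(f,t)$, and put $\lambda:=2K(f,t)/t$. Then $\norm{h}_\infty\le\lambda$, so $\{|f|>2\lambda\}\subset\{|g|>\lambda\}$, and Chebyshev in the Luxemburg norm gives
\[
\lambda\,\varphi_f(2\lambda)\le\norm{g}_{p(\cdot)}\le 2K(f,t)=t\lambda.
\]
Hence $\varphi_f(2\lambda)\le t$, and so $\lambda\varphi_f(2\lambda)^{1-\theta}\le t^{-\theta}\cdot t\lambda\approx t^{-\theta}K(f,t)$. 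As $t$ runs through the dyadics, the values $\lambda(t)=2K(f,t)/t$ cover the relevant dyadic levels $2^k$. Here the two monotonicities are essential: $\lambda(t)$ is non-increasing, and $K(f,t)$ is non-decreasing with $K(f,2t)\le2K(f,t)$. They allow me to bound each term $2^k\varphi_f(2^{k+1})^{1-\theta}$ by $t^{-\theta}K(f,t)$ for some $t$ with $\lambda(t)\approx2^k$. The remaining point is that each such $t$ must be used only boundedly often, or the repetitions controlled geometrically. Summing in $\ell_q$ then gives $\norm{f}_{L_{\widetilde p(\cdot),q}}\lesssimc\norm{f}_{\theta,q}$.

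\textbf{Main obstacle.} I expect the hardest step to be the bookkeeping in the change of variables between the $t$-scale and the $\lambda$-scale. The function $\varphi_f$ may have jumps and plateaus, so $t\mapsto\lambda(t)$ can be neither injective nor surjective onto the dyadic levels. I would first handle this Holmstedt-style: group the indices $j$ according to the value $k(j)$ (respectively the $t$ according to the dyadic range of $\lambda(t)$). Within each group $t^{-\theta}$ varies geometrically, so the sum over the group is comparable to its extreme term. This is also where $q<1$ and $p_-<1$ need care, since only the $\underline p$- and $q$-triangle inequalities are available.
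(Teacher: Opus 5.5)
The paper never proves this lemma. It quotes it from Kempka and Vyb\'iral \cite{KempkaVybiral}, and its only related argument is the adapted upper estimate (\Cref{lem:k-estimate} plus the $\mu(t)$ computation in the proof of \Cref{thm:main}). Your self-contained $K$-functional proof is correct in outline and is a genuinely different route.

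\textbf{Lower bound.} Your Chebyshev argument with $\lambda(t)=2K(f,t)/t$ supplies exactly the direction $(L_{p(\cdot)},L_\infty)_{\theta,q}\subset L_{\widetilde p(\cdot),q}$ that the paper needs from the citation. The repetition problem you worry about does not occur. Since $K(f,\cdot)$ is non-decreasing, $\lambda(2^{j+1})\geq\lambda(2^j)/2$. So for each $2^k<\sup_t\lambda(t)$, the largest $j$ with $\lambda(2^j)\geq2^k$ satisfies $\lambda(2^j)\in[2^k,2^{k+1})$. Hence $\varphi_f(2^{k+2})\leq2^j$ and $2^k\varphi_f(2^{k+2})^{1-\theta}\leq2\cdot2^{-j\theta}K(f,2^j)$, and the map $k\mapsto j$ is injective. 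Letting $t\to0$ in $\varphi_f(2\lambda(t))\leq t$ shows that higher levels contribute nothing. Run with $\sigma(g+h)\leq\sigma(g)+\sigma(h)$, the same argument would even give the first embedding of \Cref{thm:main} directly, with neither the citation nor the sublinear-operator theorem.

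\textbf{Upper bound.} Choosing $\mu$ from the single level function ($\varphi_f(\mu)\leq t$) makes the $t\mu$ term easy once you group $j$ by $k(j)$, but it moves all the work into the tail. Two points need care there.
\begin{itemize}
\item The tail must start exactly at $k(j)$, as in your later display. The layer $k(j)-1$ has $\varphi_f>t$ and is not controlled, so the range $2^k\geq\mu/2$ in your first display is too generous.
\item The bound $(\varphi_f(2^k)/t)^{\theta}\leq1$ by itself throws away the decay in $t$: every $t\geq\varphi_f(0)$ gives $k(j)=-\infty$ and the same tail.
\end{itemize}
For $q>\underline p$, group the indices $k$ by the dyadic size $\varphi_f(2^k)\in(2^{m-1},2^m]$. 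Within a group $2^k\varphi_f(2^k)\approx2^{k+m}$, so each group is dominated by its top index $\kappa_m$. This reduces the tail to the standard Hardy inequality
\[
\sum_{j}2^{-j\theta q}\Bigl(\sum_{m\leq j}y_m^{\underline p}2^{m\theta\underline p}\Bigr)^{q/\underline p}\lesssimc\sum_m y_m^q,
\qquad
y_m\lesssimc 2^{\kappa_m}\varphi_f(2^{\kappa_m})^{1-\theta}.
\]

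\textbf{Comparison with the paper.} The paper's choice in the analogous step avoids this bookkeeping. It takes $\mu(t)$ as the threshold at which the aggregated tail $(\sum_{j\geq0}2^{j\underline p}h(2^j\mu)^{\underline p})^{1/\underline p}$ drops below $t$. That gives $K\lesssimc t\mu(t)$ at once and leaves only a layer-cake in $\mu$ plus H\"older with geometric weights in the offset $j$. The argument transfers verbatim to the scalar case with $|f|$ in place of $\sigma(f)$.

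\textbf{Small slip.} In \eqref{eq:indicator-scaling}, $r(\cdot)=\widetilde p(\cdot)$ corresponds to $a=1-\theta$, not $a=1/(1-\theta)$. The identity $\norm{\1_A}_{\widetilde p(\cdot)}=\norm{\1_A}_{p(\cdot)}^{1-\theta}$ that you then use is correct.
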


\section{Proof of
\texorpdfstring{\Cref{thm:main}}{Theorem 1.1}}
\label{sec:proof-main}

We first prove the adapted truncation estimate required for the
reverse embedding.

\begin{lemma}\label{lem:k-estimate}
Let $p(\cdot)\in\Pcal(\Omega)$. For every
\[
f\in
L^{\lad}_{p(\cdot)}
+
L^{\lad}_{\infty}
\]
and every $t>0$,
\begin{equation}\label{eq:k-estimate}
K\bigl(
f,t;
L^{\lad}_{p(\cdot)},
L^{\lad}_{\infty}
\bigr)
\leq
\inf_{\mu>0}
\left\{
\norm{
\sigma(f)
\1_{\{\sigma(f)>\mu\}}
}_{p(\cdot)}
+
t\mu
\right\}.
\end{equation}
\end{lemma}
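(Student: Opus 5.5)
The plan is to fix $\mu>0$, split $f$ into two adapted sequences, one with square function bounded by $\mu$ and one whose square function is dominated by $\sigma(f)\1_{\{\sigma(f)>\mu\}}$, and then take the infimum over $\mu$. We may assume the right-hand side of \eqref{eq:k-estimate} is finite for the chosen $\mu$, since otherwise there is nothing to prove. Since $f\in L^{\lad}_{p(\cdot)}+L^{\lad}_{\infty}$, the quasi-triangle inequality shows that $\sigma(f)$ is finite almost everywhere.

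\emph{Construction.} Following the idea announced in the introduction, set $Q_n=\{\sigma_n(f)\le\mu\}$ and define
\[
g_n:=f_n\1_{Q_n},\qquad h_n:=f_n\1_{\Omega\setminus Q_n},
\]
so that $f=g+h$ coordinatewise. Since $\sigma_n(f)$ depends only on $f_1,\dots,f_n$, it is $\Fcal_n$-measurable, hence $Q_n\in\Fcal_n$. Therefore both $g$ and $h$ are adapted. This is exactly the reason for using $\sigma_n$ rather than $\sigma$ in the truncation.

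\emph{Bounding $\sigma(g)$.} The sets $Q_n$ decrease because $\sigma_n(f)$ increases in $n$. Introduce the pointwise index
\[
\tau(\omega):=\inf\{n:\sigma_n(f)(\omega)>\mu\}\in\N\cup\{\infty\}.
\]
Then $\omega\in Q_n$ if and only if $n<\tau(\omega)$. The telescoping step is
\[
\sigma(g)^2=\sum_{n<\tau}|f_n|^2=\sup_{m<\tau}\sigma_m(f)^2\le\mu^2,
\]
where the empty sum is $0$ when $\tau=1$. Hence $\norm{g}_{L^{\lad}_\infty}\le\mu$.

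\emph{Bounding $\sigma(h)$.} We have
\[
\sigma(h)^2=\sum_{n\ge\tau}|f_n|^2\le\sigma(f)^2 .
\]
Moreover, $h\equiv0$ on $\{\tau=\infty\}$, and $\{\tau=\infty\}=\{\sigma(f)\le\mu\}$ because $\sigma_n(f)\uparrow\sigma(f)$. It follows that
\[
\sigma(h)\le\sigma(f)\1_{\{\sigma(f)>\mu\}}
\]
pointwise. The Luxemburg quasi-norm \eqref{eq:luxemburg} is monotone, since the modular is monotone in $|f|$. This gives
\[
\norm{h}_{L^{\lad}_{p(\cdot)}}\le\norm{\sigma(f)\1_{\{\sigma(f)>\mu\}}}_{p(\cdot)}<\infty,
\]
so $h\in L^{\lad}_{p(\cdot)}$.

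\emph{Conclusion.} By the definition \eqref{eq:k-functional},
\[
K\bigl(f,t;L^{\lad}_{p(\cdot)},L^{\lad}_\infty\bigr)\le\norm{h}_{L^{\lad}_{p(\cdot)}}+t\norm{g}_{L^{\lad}_\infty}\le\norm{\sigma(f)\1_{\{\sigma(f)>\mu\}}}_{p(\cdot)}+t\mu .
\]
Taking the infimum over $\mu>0$ yields \eqref{eq:k-estimate}.

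The only delicate point is the bound on $\sigma(g)$. A naive truncation by $\{\sigma(f)\le\mu\}$ would destroy adaptedness. Truncating by the adapted sets $Q_n$ instead requires the observation that the surviving indices form an initial segment $\{n<\tau\}$. This is what makes the sum telescope to some $\sigma_m(f)$ with $m<\tau$, which is at most $\mu$.
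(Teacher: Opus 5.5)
Your proof is correct and follows the paper's argument. You use the same adapted truncation on $Q_n=\{\sigma_n(f)\le\mu\}$ with $g_n=f_n\1_{Q_n}$ and $h_n=f_n\1_{\Omega\setminus Q_n}$, and the same domination $\sigma(h)\le\sigma(f)\1_{\{\sigma(f)>\mu\}}$. The only difference is cosmetic: you bound $\sigma(g)$ by noting that the surviving indices form the initial segment $\{n<\tau\}$, whereas the paper performs the equivalent Abel summation $\sigma_n^2(g)=\sigma_n^2(f)\1_{Q_n}+\sum_{k=1}^{n-1}\sigma_k^2(f)(\1_{Q_k}-\1_{Q_{k+1}})\le\mu^2$.
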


\begin{proof}
Fix $\mu>0$. For $k\geq1$, set
\[
Q_k
=
\{\sigma_k(f)\leq\mu\}.
\]
Then $Q_k\in\Fcal_k$, the sequence $(Q_k)_{k\geq1}$ is
decreasing, and
\[
Q
:=
\bigcap_{k\geq1}Q_k
=
\{\sigma(f)\leq\mu\}.
\]

Define
\[
g_k
:=
f_k\1_{Q_k},
\qquad
h_k
:=
f_k\1_{\Omega\setminus Q_k}.
\]
Then $g=(g_k)$ and $h=(h_k)$ are adapted and $f=g+h$.

We first show that
\[
\norm{g}_{L^{\lad}_\infty}
\leq\mu.
\]
For every $n\geq1$,
\begin{align*}
\sigma_n^2(g)
&=
\sum_{k=1}^{n}|g_k|^2
\\
&=
\sum_{k=1}^{n}|f_k|^2\1_{Q_k}
\\
&=
\sum_{k=1}^{n}
\bigl(
\sigma_k^2(f)-\sigma_{k-1}^2(f)
\bigr)
\1_{Q_k}
\\
&=
\sum_{k=1}^{n}
\sigma_k^2(f)\1_{Q_k}
-
\sum_{k=1}^{n}
\sigma_{k-1}^2(f)\1_{Q_k}.
\end{align*}
After shifting the index in the second sum, we obtain
\[
\sigma_n^2(g)
=
\sigma_n^2(f)\1_{Q_n}
+
\sum_{k=1}^{n-1}
\sigma_k^2(f)
\bigl(
\1_{Q_k}-\1_{Q_{k+1}}
\bigr).
\]
By the definition of $Q_k$,
\[
\sigma_n^2(g)
\leq
\mu^2\1_{Q_n}
+
\mu^2
\sum_{k=1}^{n-1}
\bigl(
\1_{Q_k}-\1_{Q_{k+1}}
\bigr)
\leq
\mu^2.
\]
Therefore,
\[
\sigma(g)\leq\mu
\]
and hence
\[
\norm{g}_{L^{\lad}_\infty}
\leq\mu.
\]

For $h$, we have
\begin{align*}
\sigma^2(h)
&=
\sum_{k\geq1}
|f_k|^2
\1_{\Omega\setminus Q_k}
\\
&\leq
\sum_{k\geq1}
|f_k|^2
\1_{\Omega\setminus Q}
\\
&=
\sigma^2(f)
\1_{\Omega\setminus Q}.
\end{align*}
Consequently,
\[
\norm{h}_{L^{\lad}_{p(\cdot)}}
\leq
\norm{
\sigma(f)
\1_{\{\sigma(f)>\mu\}}
}_{p(\cdot)}.
\]

Thus,
\begin{align*}
K\bigl(
f,t;
L^{\lad}_{p(\cdot)},
L^{\lad}_{\infty}
\bigr)
&\leq
\norm{h}_{L^{\lad}_{p(\cdot)}}
+
t\norm{g}_{L^{\lad}_{\infty}}
\\
&\leq
\norm{
\sigma(f)
\1_{\{\sigma(f)>\mu\}}
}_{p(\cdot)}
+
t\mu.
\end{align*}
Taking the infimum over $\mu>0$ proves the lemma.
\end{proof}

\begin{proof}[Proof of \Cref{thm:main}]
We first prove
\[
\bigl(
L^{\lad}_{p(\cdot)},
L^{\lad}_{\infty}
\bigr)_{\theta,q}
\subset
L^{\lad}_{\widetilde p(\cdot),q}.
\]

Define the sublinear operator
\[
T(f)
:=
\sigma(f).
\]
By definition,
\[
\norm{T(f)}_{p(\cdot)}
=
\norm{\sigma(f)}_{p(\cdot)}
=
\norm{f}_{L^{\lad}_{p(\cdot)}}
\]
and
\[
\norm{T(f)}_{\infty}
=
\norm{\sigma(f)}_{\infty}
=
\norm{f}_{L^{\lad}_{\infty}}.
\]
Moreover, by the Minkowski inequality in $\ell_2$,
\[
T(f+g)
=
\sigma(f+g)
\leq
\sigma(f)+\sigma(g)
=
T(f)+T(g).
\]
Therefore, by the interpolation theorem for sublinear operators,
see Theorem 5.2.3 of Bergh and L\"ofstr\"om
\cite{BerghLofstrom},
\[
T:
\bigl(
L^{\lad}_{p(\cdot)},
L^{\lad}_{\infty}
\bigr)_{\theta,q}
\longrightarrow
\bigl(
L_{p(\cdot)},
L_\infty
\bigr)_{\theta,q}
\]
is bounded. Applying \Cref{lem:scalar-interpolation}, we obtain
\[
\bigl(
L_{p(\cdot)},
L_\infty
\bigr)_{\theta,q}
=
L_{\widetilde p(\cdot),q},
\]
and hence
\[
\norm{f}_{L^{\lad}_{\widetilde p(\cdot),q}}
=
\norm{\sigma(f)}_{L_{\widetilde p(\cdot),q}}
\lesssim
\norm{f}_{
\left(
L^{\lad}_{p(\cdot)},
L^{\lad}_{\infty}
\right)_{\theta,q}
}.
\]
This proves the first embedding.

We now prove the converse embedding
\[
L^{\lad}_{\widetilde p(\cdot),q}
\subset
\bigl(
L^{\lad}_{p(\cdot)},
L^{\lad}_{\infty}
\bigr)_{\theta,q}.
\]

Assume first that $0<q<\infty$. Put
\[
\underline p
=
\min\{1,p_-\}
\]
and define
\[
h(\lambda)
:=
\norm{
\1_{\{\sigma(f)>\lambda\}}
}_{p(\cdot)}.
\]

For every $\mu>0$,
\[
\sigma(f)\1_{\{\sigma(f)>\mu\}}
\lesssim
\sum_{j=0}^{\infty}
2^j\mu\,
\1_{\{\sigma(f)>2^j\mu\}}.
\]
Indeed,
\[
(\mu,\infty)
=
\bigcup_{j=0}^{\infty}
(2^j\mu,2^{j+1}\mu],
\]
and therefore the preceding estimate follows by considering the
dyadic interval containing $\sigma(f)$.

Using \Cref{lem:k-estimate} and the quasi-triangle inequality
\Cref{eq:quasi-triangle}, we obtain
\begin{align*}
K\bigl(
f,t;
L^{\lad}_{p(\cdot)},
L^{\lad}_{\infty}
\bigr)
&\lesssim
\inf_{\mu>0}
\left\{
\norm{
\sigma(f)
\1_{\{\sigma(f)>\mu\}}
}_{p(\cdot)}
+
t\mu
\right\}
\\
&\lesssim
\inf_{\mu>0}
\left\{
\left(
\sum_{j=0}^{\infty}
2^{j\underline p}
h(2^j\mu)^{\underline p}
\right)^{1/\underline p}
\mu
+
t\mu
\right\}.
\end{align*}

For $t>0$, define
\[
\mu(t)
:=
\inf
\left\{
\mu>0:
\left(
\sum_{j=0}^{\infty}
2^{j\underline p}
h(2^j\mu)^{\underline p}
\right)^{1/\underline p}
\leq t
\right\}.
\]
By approximation from above and the monotonicity of $h$, one has
\[
\left(
\sum_{j=0}^{\infty}
2^{j\underline p}
h(2^j\mu(t))^{\underline p}
\right)^{1/\underline p}
\leq t.
\]
Consequently,
\[
K\bigl(
f,t;
L^{\lad}_{p(\cdot)},
L^{\lad}_{\infty}
\bigr)
\lesssim
t\mu(t).
\]

Therefore,
\begin{align*}
\int_0^\infty
t^{-\theta q}
K(f,t)^q
\frac{dt}{t}
&\lesssim
\int_0^\infty
t^{(1-\theta)q}
\mu(t)^q
\frac{dt}{t}
\\
&=
\sum_{k\in\Z}
\int_{\{t:2^k<\mu(t)\leq2^{k+1}\}}
t^{(1-\theta)q}
\mu(t)^q
\frac{dt}{t}
\\
&\lesssim
\sum_{k\in\Z}
2^{kq}
\int_{\{t:\mu(t)>2^k\}}
t^{(1-\theta)q}
\frac{dt}{t}.
\end{align*}

If $\mu(t)>2^k$, then
\[
t
\leq
\left(
\sum_{j=0}^{\infty}
2^{j\underline p}
h(2^{j+k})^{\underline p}
\right)^{1/\underline p}.
\]
Hence,
\begin{align*}
\int_0^\infty
t^{-\theta q}
K(f,t)^q
\frac{dt}{t}
&\lesssim
\sum_{k\in\Z}
2^{kq}
\left[
\sum_{j=0}^{\infty}
2^{j\underline p}
h(2^{j+k})^{\underline p}
\right]^{(1-\theta)q/\underline p}.
\end{align*}

Set
\[
s
:=
\frac{(1-\theta)q}{\underline p}.
\]

If $s\leq1$, then
\[
\left(
\sum_{j=0}^{\infty}a_j
\right)^s
\leq
\sum_{j=0}^{\infty}a_j^s
\]
for nonnegative $(a_j)$. Therefore,
\begin{align*}
\int_0^\infty
t^{-\theta q}
K(f,t)^q
\frac{dt}{t}
&\lesssim
\sum_{k\in\Z}
2^{kq}
\sum_{j=0}^{\infty}
2^{j(1-\theta)q}
h(2^{j+k})^{(1-\theta)q}
\\
&=
\sum_{\ell\in\Z}
2^{\ell q}
h(2^\ell)^{(1-\theta)q}
\sum_{j=0}^{\infty}
2^{-\theta jq}
\\
&\lesssim
\sum_{\ell\in\Z}
2^{\ell q}
h(2^\ell)^{(1-\theta)q}.
\end{align*}

If $s>1$, choose
\[
0<\varepsilon
<
\frac{\theta}{1-\theta}.
\]
By H\"older's inequality,
\[
\left(
\sum_{j=0}^{\infty}
2^{j\underline p}
h(2^{j+k})^{\underline p}
\right)^s
\lesssim
\sum_{j=0}^{\infty}
2^{(\varepsilon+1)(1-\theta)jq}
h(2^{j+k})^{(1-\theta)q}.
\]
Consequently,
\begin{align*}
\int_0^\infty
t^{-\theta q}
K(f,t)^q
\frac{dt}{t}
&\lesssim
\sum_{k\in\Z}
2^{kq}
\sum_{j=0}^{\infty}
2^{(\varepsilon+1)(1-\theta)jq}
h(2^{j+k})^{(1-\theta)q}
\\
&=
\sum_{\ell\in\Z}
2^{\ell q}
h(2^\ell)^{(1-\theta)q}
\sum_{j=0}^{\infty}
2^{(\varepsilon(1-\theta)-\theta)jq}
\\
&\lesssim
\sum_{\ell\in\Z}
2^{\ell q}
h(2^\ell)^{(1-\theta)q}.
\end{align*}

By the scaling identity \Cref{eq:indicator-scaling},
\[
\norm{
\1_{\{\sigma(f)>2^\ell\}}
}_{\widetilde p(\cdot)}
=
\norm{
\1_{\{\sigma(f)>2^\ell\}}
}_{p(\cdot)}^{1-\theta}.
\]
Therefore, using \Cref{eq:dyadic-lorentz},
\begin{align*}
\int_0^\infty
t^{-\theta q}
K(f,t)^q
\frac{dt}{t}
&\lesssim
\sum_{\ell\in\Z}
2^{\ell q}
\norm{
\1_{\{\sigma(f)>2^\ell\}}
}_{\widetilde p(\cdot)}^q
\\
&\approxsim
\norm{
\sigma(f)
}_{L_{\widetilde p(\cdot),q}}^q.
\end{align*}
Thus,
\[
\norm{f}_{
\left(
L^{\lad}_{p(\cdot)},
L^{\lad}_{\infty}
\right)_{\theta,q}
}
\lesssim
\norm{f}_{L^{\lad}_{\widetilde p(\cdot),q}}.
\]

For $q=\infty$, the same argument is carried out.
Using \Cref{eq:dyadic-lorentz-infty}, we obtain
\[
\sup_{t>0}
t^{-\theta}K(f,t)
\lesssimc
\sup_{\ell\in\Z}
2^\ell
h(2^\ell)^{1-\theta}
\approxsim
\norm{
\sigma(f)
}_{L_{\widetilde p(\cdot),\infty}}.
\]

Hence, the second embedding holds for all
$0<q\leq\infty$. Combining the two embeddings completes the
proof.
\end{proof}


\section*{Funding}
This research did not receive any specific grant from funding agencies
in the public, commercial, or not-for-profit sectors.

\section*{Declaration of competing interest}
The author declares that he has no known competing financial interests
or personal relationships that could have appeared to influence the
work reported in this paper.

\section*{Data availability}
No data were used for the research described in this article.

\section*{Declaration of generative AI and AI-assisted technologies in the manuscript preparation process}
During the preparation of this work, the author used ChatGPT (OpenAI)
to assist with language editing, manuscript organization, and LaTeX
formatting. After using this tool, the author reviewed and edited the
content as needed and takes full responsibility for the content of the
published article.

\end{document}